%%%%%%%%%%%%%%%%%%%%%%%%%%%%%%%%%%%%%%%%%%%%%%%%%%%%%%%%%%%%%%%%%%%%%%%%%%%%%%%%
%2345678901234567890123456789012345678901234567890123456789012345678901234567890
%        1         2         3         4         5         6         7         8

\documentclass[letterpaper, 10 pt, conference]{ieeeconf}  % Comment this line out if you need a4paper

\IEEEoverridecommandlockouts                              % This command is only needed if 
                                                          % you want to use the \thanks command

\overrideIEEEmargins                                      % Needed to meet printer requirements.

%In case you encounter the following error:
%Error 1010 The PDF file may be corrupt (unable to open PDF file) OR
%Error 1000 An error occurred while parsing a contents stream. Unable to analyze the PDF file.
%This is a known problem with pdfLaTeX conversion filter. The file cannot be opened with acrobat reader
%Please use one of the alternatives below to circumvent this error by uncommenting one or the other
%\pdfobjcompresslevel=0
%\pdfminorversion=4

% See the \addtolength command later in the file to balance the column lengths
% on the last page of the document

% The following packages can be found on http:\\www.ctan.org
%\usepackage{graphics} % for pdf, bitmapped graphics files
%\usepackage{epsfig} % for postscript graphics files
%\usepackage{mathptmx} % assumes new font selection scheme installed
%\usepackage{times} % assumes new font selection scheme installed
%\usepackage{amsmath} % assumes amsmath package installed
%\usepackage{amssymb}  % assumes amsmath package installed

\title{\LARGE \bf
Simulation of Nonlinear Systems Trajectories: between Models and Behaviors*
}

\author{Antonio Fazzi$^{1}$ 
	and Alessandro Chiuso$^{1}$% <-this % stops a space
\thanks{*This work was not supported by any organization}% <-this % stops a space
\thanks{$^{1}$the authors are with the Department of Information Engineering, 
        University of Padova, via Gradenigo 6/b, I-35131 Padova, Italy
        {\tt\small antonio.fazzi@unipd.it}, 
    {\tt\small alessandro.chiuso@unipd.it}
}%
%\thanks{$^{2}$Bernard D. Researcheris with the Department of Electrical Engineering, Wright State University,
%        Dayton, OH 45435, USA
%        {\tt\small b.d.researcher@ieee.org}}%
}

\newcommand{\R}{\mathbb R}

\usepackage{amsmath,amsfonts,amssymb,color}
%\raggedbottom
\usepackage{url}
\newtheorem{theorem}{Theorem}
\newtheorem{definition}[theorem]{Definition}
\newtheorem{lemma}[theorem]{Lemma}

\newtheorem{problem}[theorem]{Problem}
\newtheorem{remark}[theorem]{Remark}

\usepackage{algorithm}
\usepackage{algpseudocode}
\usepackage{graphics}
\usepackage{graphicx}
\usepackage{epstopdf}

\begin{document}

\maketitle
\thispagestyle{empty}
\pagestyle{empty}

%%%%%%%%%%%%%%%%%%%%%%%%%%%%%%%%%%%%%%%%%%%%%%%%%%%%%%%%%%%%%%%%%%%%%%%%%%%%%%%%
\begin{abstract}
	In this paper, we study connections between the classical model-based approach to nonlinear system theory, where systems are represented by equations, and the nonlinear behavioral approach, where systems are defined as sets of trajectories. In particular, we focus on equivalent representations of the systems in the two frameworks for the problem of simulating a future nonlinear system trajectory starting from a given set of noisy data. The goal also includes extending some existing results from the deterministic to the stochastic setting. 

 \textit{Keywords:}
	Nonlinear system theory - Systems representation - Simulation of trajectories
%\end{keyword}

\end{abstract}

%%%%%%%%%%%%%%%%%%%%%%%%%%%%%%%%%%%%%%%%%%%%%%%%%%%%%%%%%%%%%%%%%%%%%%%%%%%%%%%%
\section{INTRODUCTION}
\label{sec:intro}
The classical way to solve a problem in the field of system theory is to use a (possibly estimated) system model, that is a set of equations describing the dynamic, to implement the sought design. The difficulties with this approach started  whenever the to be modeled systems became complex and~/~or interconnected, hence the model estimation step became the most computationally expensive part of the whole algorithmic procedure \cite{expdesign}.  To deal with this issue, motivated by the increasing quantity of data, the popularity of \textit{data-driven} algorithms increased; such methods design the solution method by learning the system dynamic directly from a set of data (without using explicitly the system equations). 

A nowadays popular data-driven approach is the \textit{behavioral setting} \cite{PoldermanWillems}, where dynamical systems are defined as sets of trajectories (the observed data). Modern applications motivated the extension of such a theory from linear time-invariant (LTI) to classes of nonlinear systems \cite{narx,rueda2020data,ddsim-narx,Hemelhof}. Such extensions are realized by a LTI embedding of the nonlinear dynamic over a suitable finite dimensional space.  

After a brief introduction and a recap of related results, we focus on the problem of simulating trajectories of some nonlinear systems, by looking at the problem from both the data-driven viewpoint and the model-based one. The main contribution to this problem is twofold: propose an iterative data-driven prediction algorithm whose solution is the same as the model-based approach, and extend some existing results from the deterministic to the stochastic setting. 

 The problem of predicting future system trajectories is the cornerstone of predictive control, thus our analysis provides the basis
  for nonlinear predictive control, whose solution is currently based on linearization (e.g. \cite{BerbdatadrivenMPC}).

 \section{Notation and preliminaries}
%\label{sec:pre}
%In this section, we introduce the notation used in the paper and some key points and results. 
%
%
%\subsection{Systems, representations and behaviors}
The definition of a (discrete-time and real valued) dynamical system is a triple $(\mathbb{N}, \mathbb{R}^q, \mathcal{B})$, where $\mathbb{N}$ is the time axis, $\mathbb{R}^q$ is the signal space ($q$ is the number of variables) and $\mathcal{B} \subseteq  (\mathbb{R}^q)^{\mathbb{N}}$ is a subset of admissible functions $w: \mathbb{N} \to \mathbb{R}^q$, the \emph{behavior}. The word \emph{admissible} means that the trajectories $w \in \mathcal{B}$ satisfy a  difference equation.

A leading role in the behavioral system theory is played by Hankel matrices built from observed trajectories since such matrices are able to \emph{learn} (under suitable assumption, see Lemma \ref{lemma:gpe} below) the system dynamic.
\begin{definition}
	\label{def:hank}
	Given a time series $w(t) \in \R^q$, the Hankel matrix with $L$ rows, $H_L(w) \in \R^{qL \times (T-L+1)}$, is
	\begin{equation}
	H_L(w) = \begin{pmatrix}
	w(1) & w(2) & \cdots & w(T-L+1) \\
	w(2) & w(3) & \cdots & w(T-L+2) \\
	\vdots & \vdots &  & \vdots \\
	w(L) & w(L+1) & \cdots & w(T) 
	\end{pmatrix}
	\end{equation}.
\end{definition}
\begin{lemma}[\cite{Ident}]
	\label{lemma:gpe}
	We are given a LTI system $\mathcal{B}$ with $q$ variables, $m$ inputs and order $n$, and one of its length-$T$ trajectories $w \in \mathcal{B}$.   
	It holds that rank $H_L(w) = mL+n$ if and only if $\mathcal{B} =$ image $H_L(w)$. 
\end{lemma}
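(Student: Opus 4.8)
The idea is to realise the column span of $H_L(w)$ as a subspace of the finite-horizon restriction of $\mathcal{B}$, to compute the dimension of that restriction, and then to invoke the elementary fact that a subspace of a finite-dimensional vector space coincides with the whole space exactly when their dimensions agree.

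First I would introduce $\mathcal{B}|_L \subseteq \R^{qL}$, the set of restrictions to the window $\{1,\dots,L\}$ of all trajectories $w'\in\mathcal{B}$; since $\mathcal{B}$ is an LTI behavior this is a linear subspace. The $t$-th column of $H_L(w)$ is the stacked window with blocks $w(t),\dots,w(t+L-1)$, i.e.\ the length-$L$ restriction of the shifted signal $k\mapsto w(k+t-1)$, which again lies in $\mathcal{B}$ by time invariance. Hence every column of $H_L(w)$ belongs to $\mathcal{B}|_L$, so image $H_L(w)\subseteq\mathcal{B}|_L$, and in particular rank $H_L(w)\le\dim\mathcal{B}|_L$.

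Second --- the technical core --- I would prove $\dim\mathcal{B}|_L=mL+n$, valid for $L$ large enough (it suffices that $L\ge n$, the regime in which the statement is intended). Fix an input/output partition of the $q$ variables with $m$ inputs and a minimal state-space representation $x(k+1)=Ax(k)+Bu(k)$, $y(k)=Cx(k)+Du(k)$ of order $n$, so that $(C,A)$ is observable. The linear map $\Phi:\R^{n}\times\R^{mL}\to\R^{qL}$ sending $(x(1),u(1),\dots,u(L))$ to the generated length-$L$ trajectory of $(u,y)$ has image exactly $\mathcal{B}|_L$ --- each generated window lies in $\mathcal{B}|_L$, and conversely every $w'\in\mathcal{B}|_L$ extends to some $\hat w\in\mathcal{B}$, which admits a state trajectory and is thus of the form $\Phi(\hat x(1),\hat u(1),\dots,\hat u(L))$ over the window --- and $\Phi$ is injective: if two data produce the same window the inputs coincide, and then the two state trajectories differ by an autonomous trajectory of $A$ annihilated by $C$ over $L\ge n$ steps, hence zero by observability. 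Therefore $\dim\mathcal{B}|_L=n+mL$.

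Combining the two steps, image $H_L(w)$ is a subspace of $\mathcal{B}|_L$ with $\dim\mathcal{B}|_L=mL+n$, so rank $H_L(w)=mL+n$ if and only if image $H_L(w)=\mathcal{B}|_L$; and for $L$ above the lag the finite restriction $\mathcal{B}|_L$ determines $\mathcal{B}$ uniquely (for instance, the left kernel of $H_L(w)$ provides a kernel representation of $\mathcal{B}$), so this equality is precisely the assertion $\mathcal{B}=$ image $H_L(w)$, proving both implications. The only genuinely delicate point is the dimension count $\dim\mathcal{B}|_L=mL+n$: it hinges on passing to a \emph{minimal} realization, so that the behavioral order equals the state dimension, and on $L$ being large enough for observability to force the injectivity of $\Phi$.
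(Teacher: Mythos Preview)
The paper does not prove this lemma at all: it is quoted verbatim from the cited reference \cite{Ident} and stated without argument, serving only as background for the nonlinear extension in Lemma~\ref{lemma:nonlgpe}. So there is no ``paper's own proof'' to compare against.

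That said, your argument is the standard one and is correct. The containment image $H_L(w)\subseteq\mathcal{B}|_L$ via time invariance, the dimension count $\dim\mathcal{B}|_L=mL+n$ through a minimal state-space realisation and observability, and the subspace-equals-ambient-space-iff-dimensions-match conclusion are exactly how this result is established in the behavioral literature. Your caveat that $L$ must be large enough (at least the lag, and in your injectivity step at least $n$) is appropriate; the paper implicitly assumes this regime throughout. The only cosmetic point is that the statement ``$\mathcal{B}=\text{image }H_L(w)$'' is, as you note, really shorthand for ``$\mathcal{B}|_L=\text{image }H_L(w)$'', and you handle that identification correctly.
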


Lemma \ref{lemma:gpe} is a powerful result since it allows to write down all the system trajectories from an observed one. However, the LTI assumption on the system limits its applications. 
%%%%%%%%%%%%
On the same line of \cite{ddsim-narx}, we consider single-input single-output (SISO) nonlinear systems of the form:
%where the variables can depend on their past values \cite{Billings}: 
\begin{equation}
\label{nonleq}
\begin{aligned}
\mathcal{B}_{nl} &= \{w = \begin{bmatrix}
u \\ y
\end{bmatrix} \in (\mathbb{R}^2)^{N} | \  \eqref{nonleq}\ \  \text{holds} \}, \\
y(t) &= f(x_y(t), x_{u}(t), t), \ \ t=1, \dots, N \\
x_y(t) &= (\sigma^{-\ell}y(t), \dots, \sigma^{-1}y(t))^T, \\
x_{u}(t) = &(\sigma^{-\ell}u(t), \dots, \sigma^{-1}u(t), u(t))^T, \\
f(x_y, x_{u}) &= 
  \theta_{lin} \begin{bmatrix}
x_y \\ x_{u}
\end{bmatrix} + \theta_{nl} \phi_{nl}(x_y, x_{u}). 
%\\
%\phi_{nl} &= \xi_{nl}(x_y)   + \psi_{nl}(x_{h(u)}). 
\end{aligned}
\end{equation}
In \eqref{nonleq} $u, y$ are the input (the free variable) and the output (the dependent variable), respectively, 
$\ell$ is the system lag (the maximum time shift in the system equation) and $N$ is the trajectory length. 
$\theta_{lin}, \theta_{nl}$ are parameter vectors, while 
the finite set of (arbitrary) nonlinear functions $\phi_{nl}=\{\phi_{nl}^1, \dots, \phi_{nl}^K\}$ 
determines the system dynamic. 
% and $h$ is an arbitrary invertible function. 
$\sigma^i$ is the $i-$th shift operator (the sign of the superscript determines if the shift is backward or forward).	For convenience, in the paper, we omit the time dependence.
% and we assume $h$ to be the identity function.
\begin{remark}
	Systems of the form \eqref{nonleq}  include  the class of SISO output-generalized bilinear systems \cite{Hemelhof}. 
\end{remark}

The extension of Definition \ref{def:hank} and Lemma \ref{lemma:gpe} to systems of the form \eqref{nonleq} reads:
\begin{definition}
	\label{def:nonlhank}
	Given a time series $w(t) \in \R^2$ and a finite set of nonlinear functions $\phi_{nl} = \{\phi_{nl}^1, \dots, \phi_{nl}^{K}\}$,  the extended Hankel matrix, $H_{L, \phi_{nl}}(w)$, is
	\begin{equation}
	\label{eq:nonlhank}
	H_{L, \phi_{nl}}(w) = 
	\begin{bmatrix}
	H_L(w) \\ H_L(\phi_{nl}^1(w)) \\ \vdots \\ H_L(\phi_{nl}^K(w))
	\end{bmatrix}.
	\end{equation}
\end{definition}

By exploiting the LTI embedding of the nonlinear dynamic proposed in \cite{ddsim-narx}, we can state the following 
\begin{lemma}
	\label{lemma:nonlgpe}
	Let $w(t)$ be a $T$-long trajectory
of a system $\mathcal{B}_{nl}$ of the form \eqref{nonleq}, and assume that
$H_{L, \phi_{nl}}(w)$ has rank $(1 + K)L + \ell$ with $L \geq \ell+ 1$. Then,
all the trajectories of $\mathcal{B}_{nl}$ are in the range of $H_{L, \phi_{nl}}(w)$.
\end{lemma}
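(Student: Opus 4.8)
\emph{Proof strategy.} The plan is to deduce Lemma~\ref{lemma:nonlgpe} from the linear Lemma~\ref{lemma:gpe} through the LTI embedding of \cite{ddsim-narx}. First I would attach to every trajectory $v=\begin{bmatrix}u\\ y\end{bmatrix}$ of $\mathcal{B}_{nl}$ the \emph{lifted} signal
\begin{equation*}
\tilde v \;=\; \big(v,\ \phi_{nl}^1(v),\ \dots,\ \phi_{nl}^K(v)\big)\in(\mathbb{R}^{2+K})^{N},
\end{equation*}
and introduce the LTI behavior $\mathcal{B}_{\mathrm{LTI}}$ obtained by reading the equation in \eqref{nonleq} as a \emph{linear} difference equation in the variables $(u,y,z^1,\dots,z^K)$,
\begin{equation*}
y(t)=\theta_{lin}\begin{bmatrix}x_y(t)\\ x_u(t)\end{bmatrix}+\sum_{j=1}^K \theta_{nl}^j\, z^j(t),
\end{equation*}
with $z^1,\dots,z^K$ left as free variables. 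By construction $\tilde v\in\mathcal{B}_{\mathrm{LTI}}$ whenever $v\in\mathcal{B}_{nl}$ (choose $z^j=\phi_{nl}^j(v)$), so $\mathcal{B}_{nl}$ embeds into $\mathcal{B}_{\mathrm{LTI}}$; moreover $H_L(\tilde v)$ and $H_{L,\phi_{nl}}(v)$ consist of the same rows up to a permutation, hence have the same rank and the same image (up to that permutation).

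The second step is to read off the two integers that enter Lemma~\ref{lemma:gpe} for $\mathcal{B}_{\mathrm{LTI}}$. The displayed equation expresses $y(t)$ explicitly in terms of $y(t-\ell),\dots,y(t-1)$, of $u(t-\ell),\dots,u(t)$, and of $z^1(t),\dots,z^K(t)$: thus $y$ is the only dependent variable while $u,z^1,\dots,z^K$ are free, so $\mathcal{B}_{\mathrm{LTI}}$ has $m=1+K$ inputs, and the single difference equation has lag $\ell$, so $\mathcal{B}_{\mathrm{LTI}}$ has order $n=\ell$. Hence the rank that Lemma~\ref{lemma:gpe} prescribes for a persistently exciting trajectory of $\mathcal{B}_{\mathrm{LTI}}$ is $mL+n=(1+K)L+\ell$; the assumption $L\ge\ell+1$ (i.e.\ $L$ exceeds the lag of $\mathcal{B}_{\mathrm{LTI}}$) is what guarantees that this value is actually attainable, so that Lemma~\ref{lemma:gpe} can be invoked.

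The third step applies this to the given data $w$. Its lift $\tilde w$ is a trajectory of $\mathcal{B}_{\mathrm{LTI}}$, and, by hypothesis, rank $H_L(\tilde w)=$ rank $H_{L,\phi_{nl}}(w)=(1+K)L+\ell=mL+n$; Lemma~\ref{lemma:gpe} then yields $\mathcal{B}_{\mathrm{LTI}}=$ image $H_L(\tilde w)$ on the window $[1,L]$. Finally, for any length-$L$ trajectory $v\in\mathcal{B}_{nl}$, the embedding gives $\tilde v\in\mathcal{B}_{\mathrm{LTI}}=$ image $H_L(\tilde w)$, i.e.\ $H_L(\tilde w)\,g=\tilde v$ for some $g$; undoing the row permutation, $H_{L,\phi_{nl}}(w)\,g=\big(v,\phi_{nl}^1(v),\dots,\phi_{nl}^K(v)\big)$, which is exactly the assertion that the (lifted) trajectory $v$ lies in the range of $H_{L,\phi_{nl}}(w)$.

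The step I expect to be the main obstacle is the second one: establishing that the LTI embedding has \emph{exactly} $m=1+K$ inputs and order \emph{exactly} $n=\ell$, so that the generic rank $mL+n$ of Lemma~\ref{lemma:gpe} specializes precisely to the number $(1+K)L+\ell$ in the hypothesis. In particular one must verify that appending the feature signals $z^j$ as free inputs neither increases the output lag nor inflates the order, and that Lemma~\ref{lemma:gpe} (from \cite{Ident}) does apply to $\mathcal{B}_{\mathrm{LTI}}$ under the stated rank condition — which is where $L\ge\ell+1$ is used.
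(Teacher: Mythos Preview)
The paper does not give a detailed proof of this lemma; it merely states it as a consequence of the LTI embedding of \cite{ddsim-narx} together with Lemma~\ref{lemma:gpe}. Your proposal carries out exactly that argument --- lift $w$ to $\tilde w=(w,\phi_{nl}^1(w),\dots,\phi_{nl}^K(w))$, read \eqref{nonleq} as a linear equation in the augmented variables so that the embedding LTI system has $m=1+K$ inputs and order $n=\ell$, and then apply Lemma~\ref{lemma:gpe} to $H_L(\tilde w)$, which coincides with $H_{L,\phi_{nl}}(w)$ up to a row permutation --- so it is correct and matches the paper's intended route.
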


The rank constraints in both Lemma \ref{lemma:gpe} and \ref{lemma:nonlgpe} hold as far as the data are noise-free. If we work with noise-corrupted data,  the involved Hankel matrices appear full rank. In this paper we consider white Gaussian noise on the problem data; the noise corrupted data are generated as
\begin{equation}
\label{noise}
\tilde{y}(t) = f(x_{\tilde{y}}(t), x_u(t)) + \mu r(t),
\end{equation} 
where $\mu$ is the noise level and $r(t)$ is a random scalar generated from i.i.d. standard normal distributions.

We are going to simulate future trajectories of systems of the form \eqref{nonleq} from a set of noisy data. We state the problem first, recalling some properties, and we propose an algorithm then to estimate the solution.

\section{Simulation of system trajectories}
 In this section, we first state the problem of simulating system trajectories from a set of noise-free data and present existing solution methods; we then propose an algorithm to predict the same trajectories from a set of noisy data. 
\subsection{Data-driven and model-based simulation}
\label{sec:pred}
In the behavioral setting, this problem has been studied first in \cite{ddLTIsim} for the class of LTI systems. The solution comes from Willems' fundamental lemma
\cite{flemma}. This milestone result states that it is possible to generate all the finite length system trajectories from an observed longer one.
The solution method was recently extended to a class of nonlinear systems in \cite{ddsim-narx} by a finite-dimensional  LTI embedding and a consequent 
modification of the Hankel matrix in order  to model the nonlinearities in the data.

The simulation problem we consider follows:
\begin{problem}
	\label{prob:ddsimnonl}
	Given a finite-length data trajectory $w_d=col(u_d, y_d) \in \R^2$ of an unknown nonlinear system $\mathcal{B}_{nl}$ of the form \eqref{nonleq}, a finite set of nonlinear functions $\phi_{nl}$ (that generates the system equation), an initial condition $w_{ini}=col(u_{ini}, y_{ini}) \in (\R^2)^{\ell}$ and an input signal $u_f \in \R^{t_f}$, find the output $y_f$  such that the trajectory $\begin{bmatrix}
	u_f \\ y_f
	\end{bmatrix} \in \mathcal{B}_{nl}$ and it matches the initial condition. 
\end{problem}

To deal with Problem \ref{prob:ddsimnonl}, 
 \cite{ddsim-narx} restricts to the class of generalized bilinear systems, that are systems whose functions $\phi_{nl}$ have the form $\phi_{nl}(x_y, x_u) = \psi_{nl}(x_u) + \xi_{nl}(x_u) \otimes x_y$, for some nonlinear functions $\psi_{nl}, \xi_{nl}$. The solution method relies on embedding the system $\mathcal{B}_{nl}$  into a linear one by adding sets of data transformed via the functions in $\phi_{nl}$ and exploiting a matrix representation of such special system class, where we can write the added nonlinear inputs as $u_{nl} = \psi + \begin{bmatrix}
 \Xi_p & \Xi_f
 \end{bmatrix} \begin{bmatrix}
 y_{ini} \\ y_f
 \end{bmatrix}$ (see \cite[Lemma 6]{ddsim-narx} for details).
 The restriction to such a system class involves the ability to simulate the whole trajectory by the solution of one linear system of equations. 
The solution proposed in \cite{ddsim-narx} follows:
\begin{itemize}
	\item collect all the nonlinear data terms in a new input variable $u_{nl} = \phi_{nl}(x_y, x_u)$;
	\item build the Hankel matrices 
	\begin{equation}
	\label{nonlsimhanks}
	\begin{aligned}
	H_{\ell + t_f}(u_d) &= \begin{bmatrix}
	H_{\ell}(u_d) \\ H_{t_f}(\sigma^{\ell} u_d)
	\end{bmatrix}, \\
	H_{\ell + t_f}(y_d) &= \begin{bmatrix}
	H_{\ell}(y_d) \\ H_{t_f}(\sigma^{\ell} y_d)
	\end{bmatrix}, \\
	H_{t_f}(u_{d, nl}) &= H_{t_f}(\phi_{nl}(x_{y_d}, x_{u_d})).
	\end{aligned}
	\end{equation}
\item Compute the minimum norm $g$ such that
 \begin{equation}
\label{formula16}
\begin{bmatrix}
H_{\ell}(u_d) \\ 
H_{\ell}(y_d)  \\ 
H_{t_f}(\sigma^{\ell} u_d) \\
H_{t_f}(u_{d, nl}) - \Xi_f H_{t_f}(\sigma^{\ell} y_d)
\end{bmatrix}g=\begin{bmatrix}
u_{ini} \\ y_{ini} \\  u_{f} \\ \psi+\Xi_p y_{ini}
\end{bmatrix}.
%
%y_{f} = H_{t_f}(y_d) \begin{bmatrix}
%H_{\ell, \phi_{nl}}(u_d) \\ 
%H_{\ell, \phi_{nl}}(y_d)  \\ 
%H_{t_f, \phi_{nl}}(u_d) 
%\end{bmatrix}^+ \underbrace{\begin{bmatrix}
%u_{ini} \\ \phi_{nl}(u_{ini}) \\ y_{ini} \\ \phi_{nl}(y_{ini}) \\ u_{f} \\ \phi_{nl}(u_f)
%\end{bmatrix}}_{v_{ini}}.
\end{equation}
\item Result: 
\begin{equation}
y_{f} = H_{t_f}(\sigma^{\ell} y_d) g.
\end{equation}
\end{itemize}
The solution in \eqref{formula16} has been extended to systems that allow nonlinear functions of the past outputs in \cite{Hemelhof}, that is $\phi_{nl}(x_y, x_u) = \psi_{nl}(x_y) + \xi_{nl}(x_y) \otimes x_u$. 

Beside the modern data-driven approach for the simulation of system trajectories, a classical way to solve Problem \ref{prob:ddsimnonl} is based on the  estimation of a system model. This involves the computation of a set of parameters that define the linear combination among the nonlinear system variables. 
The solution method requires to build the matrix
\begin{equation}
\label{eq:model-based}
%\begin{bmatrix}
%\theta_{lin} & \theta_{nl}  & -1
%\end{bmatrix}
\begin{bmatrix}
x_u(1) & \cdots & x_u(T) \\
x_y(1) & \cdots & x_y(T) \\
\phi_{nl}(x_y(1), x_u(1)) & \cdots & \phi_{nl}(x_y(T), x_u(T)) 
\\
y(1) & \cdots & y(T)
\end{bmatrix},  %=  0,
\end{equation}
and then to follow these two steps:
\begin{enumerate}
	\item Compute the parameters $\hat{\theta}_{lin}, \hat{\theta}_{nl}$ as the left kernel of the matrix in \eqref{eq:model-based} (the solution is unique if the left kernel has dimension one);
	\item plug the estimated parameters in \eqref{nonleq} to recover the system equation and to compute $y$.
\end{enumerate}

\subsection{Step-by-step simulation}
A natural question is how to deal with Problem \ref{prob:ddsimnonl} whenever the data are corrupted by noise, that is how to simulate trajectories of a general system $\mathcal{B}_{nl}$ of the form \eqref{nonleq} from noisy data. 
We propose an iterative data-driven prediction algorithm that turns out to compute the same estimate as the model-based strategy.  
The algorithm exploits the link between the initial conditions and the first point of the simulated trajectory.
Finite horizon trajectories are estimated 
by iterating length-one simulations and updating the initial conditions at each step.
The problem data are preprocessed first to fulfill the rank constraint in Lemma \ref{lemma:nonlgpe}. 

We use the following result based on the truncated LQ decomposition. 
\begin{lemma}
	\label{lemma:LQ}
	Any real matrix $A$ may be decomposed as:
	\begin{equation*}
	A=LQ,
	\end{equation*}
	where $L$ is a lower triangular matrix and $Q$ is a unitary matrix\footnote{the LQ decomposition can be computed from the (more popular) QR decomposition}.
	By writing the block factorization
	\begin{equation}
	A = 	\begin{bmatrix}
	A_1 \\ A_2
	\end{bmatrix} =  \begin{bmatrix}
	L_{11} & 0 \\ L_{21} & L_{22}
	\end{bmatrix} \begin{bmatrix}
	Q_1 \\ Q_2
	\end{bmatrix} \approxeq \begin{bmatrix}
	L_{11} Q_1 \\ L_{21} Q_1
	\end{bmatrix} = A^*,
	\end{equation}
	we get that the matrix $A^*$ is a low-rank approximation of $A$. The rank reduction is the row dimension of $Q_2$. 
\end{lemma}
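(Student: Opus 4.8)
The plan is to derive the LQ factorization from the (assumed standard) QR factorization by transposition, then extract the block structure by a direct computation, and finally bound the rank of $A^*$ by writing it as a product of two thin matrices.

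First I would invoke existence of a QR factorization $M = QR$ for any real matrix $M$ (with $Q$ orthogonal and $R$ upper triangular, obtained e.g.\ by Householder reflections or Gram--Schmidt). Applying this to $M = A^\top$ and transposing gives $A = R^\top Q^\top$; setting $L := R^\top$ (lower triangular) and $Q := Q^\top$ (orthogonal, hence unitary) establishes the first claim. Using the thin version, for $A \in \R^{p \times n}$ with $p \le n$ one may take $L \in \R^{p\times p}$ and $Q \in \R^{p\times n}$ with orthonormal rows, which is exactly the form needed for the block factorization in the statement.

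Next I would partition the rows of $A$ as $\mathrm{col}(A_1, A_2)$ and accordingly $Q = \mathrm{col}(Q_1, Q_2)$; lower-triangularity of $L$ with respect to this partition forces its $(1,2)$ block to vanish, giving the displayed block form. Multiplying out yields $A_1 = L_{11} Q_1$ and $A_2 = L_{21} Q_1 + L_{22} Q_2$, so deleting the term $L_{22} Q_2$ produces exactly $A^* = \mathrm{col}(L_{11} Q_1,\, L_{21} Q_1)$. Writing $A^* = \mathrm{col}(L_{11}, L_{21})\, Q_1$ as a product of a $p \times p_1$ matrix and a $p_1 \times n$ matrix then shows $\mathrm{rank}(A^*) \le p_1$, i.e.\ the rank drops by $p_2$, the row dimension of $Q_2$ (with equality when $L_{11}$ is invertible and $A$ has full row rank).

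I do not expect a serious obstacle: the statement essentially repackages standard facts about the (rank-revealing) LQ factorization. The one point deserving care — the closest thing to a \emph{hard part} — is justifying the word \emph{approximation}: one should observe that $A - A^* = \mathrm{col}(0,\, L_{22} Q_2)$ and that $Q_2$ has orthonormal rows, whence $\|A - A^*\|_2 = \|L_{22}\|_2$; this is small precisely when the partition isolates the negligible trailing rows of $L$, which is what the truncated LQ is designed to achieve (in general after column pivoting). The rank bound itself, by contrast, holds for any choice of partition, so the two assertions of the lemma are logically independent and I would prove them separately along the lines above.
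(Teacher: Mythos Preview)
Your argument is correct and entirely standard: deriving the LQ factorization by transposing a QR factorization, reading off the block structure from lower-triangularity, and bounding $\mathrm{rank}(A^*)$ via the factorization $A^* = \mathrm{col}(L_{11},L_{21})\,Q_1$ all go through without difficulty. The additional remark that $\|A-A^*\|_2 = \|L_{22}\|_2$ is a nice touch, though not required by the statement.

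There is nothing to compare against, however: the paper states this lemma without proof, treating it as a known linear-algebra fact (the footnote simply points the reader to the QR decomposition). So your proposal is not an alternative route but rather a fully worked justification where the paper offers none.
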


The proposed algorithm for the prediction of future trajectory from noisy data of the form \eqref{noise} is shown in Algorithm \ref{alg:ddsimnonl}. It uses Lemma \ref{lemma:LQ} to estimate the Hankel data matrix and iterates \eqref{formula16} to predict the points of the trajectory.% one-by-one. 
\begin{algorithm}
	\caption{Data-driven prediction for nonlinear systems}
	\label{alg:ddsimnonl}
	\begin{algorithmic}[1]
		\Require $u_d, \tilde{y}_d\  d=1, \dots, n_d$ (data points), $w_{ini}$ initial condition, $\ell$ (system lag), $T_f$ (prediction horizon), $u_f$ (future input), $\phi_{nl}$ (set of nonlinear functions) 
		\Ensure $y_f$ (predicted output) 
		
		\State Build the Hankel matrices as in \eqref{nonlsimhanks} with $t_f=1$:
			\begin{equation*}
		\begin{aligned}
	H_{\ell + 1}(u_d) &= \begin{bmatrix}
	H_{\ell}(u_d) \\ H_{1}(\sigma^{\ell} u_d)
	\end{bmatrix}, \\
	H_{\ell + 1}(\tilde{y}_d) &= \begin{bmatrix}
	H_{\ell}(\tilde{y}_d) \\ H_{1}(\sigma^{\ell} \tilde{y}_d)
	\end{bmatrix} \\
	H_{1}(\tilde{u}_{d, nl})  &= \begin{bmatrix}
	H_{1}(\phi_{nl}(x_{\tilde{y}_{d}}, x_{u_{d}}))
	\end{bmatrix}. 
	\end{aligned}
		\end{equation*}
		\State Build \begin{equation}
		\label{hd}
		H_d = \begin{bmatrix}
			H_{\ell}(u_d) \\ 
			H_{\ell}(\tilde{y}_d)  \\ 
			H_{1}(\sigma^{\ell} u_d) \\
			H_{1}(\phi_{nl}(x_{\tilde{y}_{d}},  x_{u_{d}}))
			\end{bmatrix}
		\end{equation}
		\State Use Lemma \ref{lemma:LQ} to project $H_{1}(\sigma^{\ell} \tilde{y}_d)$ onto the rows of the data matrix $H_d$ \eqref{hd} 
		\begin{equation*}
        H_{1}(\bar{y}_d) = \mathcal{P}_{H_d} (H_{1}(\sigma^{\ell} \tilde{y}_d))
		\end{equation*}
		\For{$i=1 : T_f$}
		\State Compute the vector $g_i$ of minimum norm that approximately satisfies the system  
			\begin{equation}
			\label{LSprob}
			H_d g_i \approx \underbrace{\begin{bmatrix}
			u_{ini, i} \\ y_{ini, i} \\ u_{f, i} \\ 
			\phi_{nl}(y_{ini, i}, \begin{bmatrix}
			u_{ini, i} \\ u_{f, i}
			\end{bmatrix})
			\end{bmatrix}}_{v_{ini, i}},
		\end{equation}
		where the subscript $i$ denotes the iteration step
	\State Set $\hat{y}_i = H_1(\bar{y}_d) g_i$	
		
		\State	Store the $i-$th pair $(u_i, \hat{y}_i)$
			
		\State	Set $(u_i, \hat{y}_i)$ as the last initial condition
		 \EndFor
	\end{algorithmic}
\end{algorithm}

The difference between Algorithm \ref{alg:ddsimnonl} and the approaches in \cite{ddsim-narx,Hemelhof} is that an iterative strategy imposes no restriction on the system class. 
We are going to show how Algorithm \ref{alg:ddsimnonl} strongly connects the model-based approach to the behavioral theory (at least for the considered prediction problem).

\begin{theorem}
	\label{th:equivalentestimators}
	The iterative data-driven prediction of Algorithm \ref{alg:ddsimnonl}  estimates the same solution as the model-based approach. 
\end{theorem}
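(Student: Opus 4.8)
\emph{Proof proposal.} The plan is to show that at every iteration Algorithm~\ref{alg:ddsimnonl} returns the one‑step‑ahead prediction of a fixed "linear‑in‑the‑regressors" model whose coefficient vector coincides exactly with the one produced by the model‑based procedure, and then to carry this equality along the whole horizon by induction on the prediction index.

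First I would assume the (generic, almost sure under \eqref{noise}) condition that the stacked data matrix $H_d$ in \eqref{hd} has full row rank. Then the least‑squares system \eqref{LSprob} is consistent, so \emph{every} solution $g_i$ of it — in particular the minimum‑norm one — satisfies $H_d g_i = v_{ini,i}$ exactly. Next, unfolding the truncated LQ decomposition of Lemma~\ref{lemma:LQ} applied in Step~3 to $A=\mathrm{col}(H_d,\,H_1(\sigma^{\ell}\tilde y_d))$ shows that the projected row can be written as $H_1(\bar y_d)=\rho^{\top}H_d$ for a vector $\rho$, and that $\rho$ is exactly the ordinary‑least‑squares coefficient obtained by regressing the row $H_1(\sigma^{\ell}\tilde y_d)$ onto the rows of $H_d$ (in the block notation of Lemma~\ref{lemma:LQ}, $\rho^{\top}=L_{21}L_{11}^{-1}$). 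Combining the two facts gives, for each $i$,
\begin{equation*}
\hat y_i \;=\; H_1(\bar y_d)\,g_i \;=\; \rho^{\top}H_d\,g_i \;=\; \rho^{\top}v_{ini,i},
\end{equation*}
so the iterate is a fixed linear functional of $v_{ini,i}$ and the particular (minimum‑norm) choice of $g_i$ is immaterial.

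The second step is to identify $\rho$ with the model‑based parameter vector. The rows of $\mathrm{col}(H_d,\,H_1(\sigma^{\ell}\tilde y_d))$ are, up to a fixed permutation, precisely the rows of the matrix \eqref{eq:model-based} built from the noisy data: the blocks $H_{\ell}(u_d)$ and $H_1(\sigma^{\ell}u_d)$ realise the components of $x_u$ (including the present input), $H_{\ell}(\tilde y_d)$ realises $x_y$, $H_1(\phi_{nl}(x_{\tilde y_d},x_{u_d}))$ realises the nonlinear regressors, and $H_1(\sigma^{\ell}\tilde y_d)$ realises the output row $y$. In the noise‑free case the model‑based estimate $[\hat\theta_{lin}\ \hat\theta_{nl}]$ is the one‑dimensional left kernel of \eqref{eq:model-based} normalised so the $y$‑coefficient equals $-1$; its natural noisy counterpart — the one consistent with Step~3 — is obtained either by ordinary‑least‑squares regression of the $y$‑row on the regressor rows or, equivalently, by taking the left kernel of the truncated‑LQ low‑rank approximation of \eqref{eq:model-based}. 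A short computation with Lemma~\ref{lemma:LQ} shows these two recipes give the same vector and that it equals $\rho$ modulo the permutation above. Since, by construction, $v_{ini,i}$ is exactly the regressor tuple $\mathrm{col}\big(x_u,x_y,\phi_{nl}(x_y,x_u)\big)$ evaluated at the current (virtual) time step — assembled from $w_{ini}$ and the already‑simulated outputs — the scalar $\rho^{\top}v_{ini,i}$ coincides with the one‑step prediction $\hat\theta_{lin}\,\mathrm{col}(x_y,x_u)+\hat\theta_{nl}\,\phi_{nl}(x_y,x_u)$ of the estimated model.

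Finally I would close the argument by induction on $i$. For $i=1$ both procedures use the same $v_{ini,1}$ assembled from $w_{ini}$, hence return the same $\hat y_1$; assuming the two simulated trajectories coincide up to step $i-1$, the update rule of Algorithm~\ref{alg:ddsimnonl} (append $(u_i,\hat y_i)$, drop the oldest sample) is identical to the recursion obtained by iterating the estimated difference equation \eqref{nonleq}, so $v_{ini,i}$ is the same for both and $\hat y_i=\rho^{\top}v_{ini,i}$ agrees. Therefore the whole predicted output $y_f$ is the same. The step I expect to require the most care is not the linear algebra but pinning down the model‑based estimator in the noisy regime: one has to argue that the truncated‑LQ / least‑squares version is the legitimate counterpart of the noise‑free "left kernel" prescription, and then track the Hankel‑to‑regressor row permutation precisely enough to be sure that $v_{ini,i}$ is literally the regressor tuple of \eqref{eq:model-based} at the current virtual time — in particular that the present input $u_{f,i}$ lives inside $x_u$ and that the nonlinear block is carried as separate rows rather than reconstructed from the others.
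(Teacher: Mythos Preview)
Your proposal is correct and follows essentially the same route as the paper: both hinge on the observation that $\mathrm{col}(H_d,\,H_1(\sigma^{\ell}\tilde y_d))$ coincides with the matrix \eqref{eq:model-based} up to a row permutation, both read the LQ decomposition of Lemma~\ref{lemma:LQ} to identify the model-based coefficients as $L_{21}L_{11}^{-1}$ (your $\rho^{\top}$), and both conclude that the one-step data-driven output equals $\rho^{\top}v_{ini,i}$. Your version is slightly more explicit than the paper's in two places --- you state the full-row-rank assumption on $H_d$ up front (the paper reaches the same conclusion via the minimum-norm solution $g_i=Q_1^{\top}\gamma$ and the orthogonality of $Q_1,Q_2$) and you spell out the induction over $i$ that the paper leaves implicit --- but these are refinements of presentation, not a different argument.
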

\begin{proof}
	We show that one step of Algorithm \ref{alg:ddsimnonl} and one step of the model-based prediction (briefly recalled in the deterministic case in Section \ref{sec:pred}), starting from the same data, initial conditions and \emph{future} input signal, estimate the same point.  The result holds true by assuming that we use the same set of basis functions $\phi_{nl}$ and that the data matrix is approximated by the LQ decomposition (see Lemma \ref{lemma:LQ}). 
	
	\textbf{Model-based:}
	Following \eqref{eq:model-based}, 
	we need to estimate the system parameters $\theta_{lin}, \theta_{nl}$ as a (approximate) left null space of the matrix
	\begin{equation}
\underbrace{
\begin{bmatrix}
x_u(1) & \cdots & x_u(T) \\
x_{\tilde{y}}(1) & \cdots & x_{\tilde{y}}(T) \\
\phi_{nl}(x_{\tilde{y}}(1), x_u(1)) & \cdots & \phi_{nl}(x_{\tilde{y}}(T), x_u(T)) 
\\
%\xi_{nl}(x_{\tilde{y}}(1) & \cdots & \xi_{nl}(x_{\tilde{y}}(T))  \\
\tilde{y}(1) & \cdots & \tilde{y}(T)
\end{bmatrix}}_{\mathcal{S}}.% \neq  0.
\end{equation}

%	 Observe that moving step-by-step (that is $t_f=1$) makes the matrix $\mathcal{S}$ in \eqref{eq:model-based} the same as the Hankel matrix in \eqref{formula16}.
%	Denote by $v_{ini, i} = \begin{bmatrix}
%	w_{ini, i};	\phi_{nl}(\tilde{w}_{ini, i}); u_i;  \phi_{nl}(u_i)
%	\end{bmatrix}$ the rightmost column vector in \eqref{formula16} collecting the initial condition and the inputs at the current iteration.
	
Since the outputs are noisy, 
	the matrix $\mathcal{S}$ is full rank. We impose the existence of a left kernel by projecting the last row on the space generated by the data $x_u, x_{\tilde{y}}$ and their nonlinear transformations. This is done using Lemma \ref{lemma:LQ}:
	\begin{equation}
	\label{eq:lq}
	\mathcal{S} =
	 \begin{bmatrix}
	L_{11} & 0 \\ L_{21} & L_{22}
	\end{bmatrix} \begin{bmatrix}
	Q_1 \\ Q_2
	\end{bmatrix}.
	\end{equation}
	The model-based estimator is then given by the projection of the row $\tilde{y}$ onto the space generated by the (noisy) rows $x_u, x_{\tilde{y}}, \phi_{nl}(x_{\tilde{y}}, x_u)$ by neglecting the term $L_{22} Q_2$ in \eqref{eq:lq}:
	\begin{equation*}
	\hat{y}_{MB} = \mathcal{P}_{x_u, x_{\tilde{y}}, \phi_{nl}(x_{\tilde{y}}, x_u)}\  (\tilde{y}) = L_{21} Q_1  = L_{21} L_{11}^{-1} L_{11} Q_1.
	\end{equation*}
	If we set $[\hat{\theta}_{lin}, \hat{\theta}_{nl}]  = L_{21} L_{11}^{-1}$, 
	 the one-step model-based estimator can be written as
	\begin{equation}
	\hat{y}_{MB} = [\hat{\theta}_{lin}, \hat{\theta}_{nl}]  v_{ini, i},
	\tag{MB}
	\end{equation}
	where $v_{ini, i}$ is the vector in the right-hand side of \eqref{LSprob}.  
	
	\textbf{Data-driven (Algorithm \ref{alg:ddsimnonl}-one step):}
	the key observation is that the matrix $\mathcal{S}$ in \eqref{eq:model-based} and the block Hankel matrix $\begin{bmatrix}
	H_d \\ 
	H_{1}(\sigma^{\ell} \tilde{y}_d)
	\end{bmatrix}$ are the same, up to a permutation of the rows (this happens since $t_f=1$). 
	Following Algorithm \ref{alg:ddsimnonl}, 
	any length-one data-driven prediction can be computed as follows: 
	\begin{enumerate}
		\item Project $H_1(\sigma^{\ell} \tilde{y}_d)$ onto the rows generated by the data (see line 3 of Algorithm \ref{alg:ddsimnonl}) and get $\bar{y} = H_1(\bar{y})$;
		\item compute $g_i$ that solves the least squares problem in \eqref{LSprob};
		\item set $\hat{y}_{DD} = \bar{y} g_i$.
	\end{enumerate}
	Since the vector $g_i$ defines a linear combination of the columns of the data matrix, because of the projection in the first step, there exists $\gamma$ such that $g_i = Q_1^T \gamma$ (where $Q_1$ is the matrix in \eqref{eq:lq}) \cite{BCFautomatica}. Doing so, we get 
	\begin{equation} 
	\label{eq:gamma}
	\begin{aligned}
	v_{ini, i} &\approx 	H_d
%	 \begin{bmatrix}
%	H_{\ell, \psi_{nl}}(u_d) \\ 
%	H_{\ell, \xi_{nl}}(y_d)  \\ 
%	H_{1, \psi_{nl}}(\sigma^{\ell} u_d) 
%	\end{bmatrix} 
	 Q_1^T \gamma
	 = L_{11} Q_1 Q_1^T \gamma =L_{11} \gamma. 
	 \end{aligned}
	\end{equation}
	The second block equation in \eqref{eq:lq} gives
	\begin{equation}
	\label{eq:yDD}
	\hat{y}_{DD} = \bar{y} g_i = (L_{21} Q_1 + L_{22} Q_2) g_i = (L_{21} Q_1) Q_1^T \gamma  = L_{21} \gamma,
	\end{equation}
	because of the orthogonality between $Q_1$ and $Q_2$ (that follows from the projection step obtained with the approximate LQ decomposition). 
	The thesis holds true since $\gamma= L_{11}^{-1} v_{ini, i}$ (see  \eqref{eq:gamma}). 
\end{proof}

A series of remarks about Algorithm \ref{alg:ddsimnonl} follows:
\begin{itemize}
	\item \textbf{Deterministic setting} In the deterministic setting, the solution computed by Algorithm \ref{alg:ddsimnonl} matches the correct output for the systems of the form \eqref{nonleq}. In particular, this holds for the output generalized bilinear systems presented in \cite{Hemelhof};
	\item \textbf{Model-based vs data-driven} Theorem \ref{th:equivalentestimators} shows an equivalence between the model-based and the behavioral approach for the prediction of future trajectories. This comes from the computational choices in Algorithm \ref{alg:ddsimnonl};
	\item {\textbf{Computational cost}}
	The computations in Algorithm \ref{alg:ddsimnonl} require the solution of $T_f$ least squares problems of the form \eqref{LSprob}.  The computational cost can be reduced by observing that all the linear systems have the same data matrix and only the right hand side (initial conditions and input) changes at each iteration \cite{Golubvanloan}. 
\end{itemize}

A natural question arises: is it still worth adopting an iterative computational strategy whenever it is possible to simulate the trajectory all at once (that is, \textit{e.g.},
 for the class of output-generalized bilinear systems in \cite{Hemelhof})? 
% by applying the data-driven prediction algorithm in \cite{ddsim-narx} with noisy data).
Our goal is not to answer this question, but 
we simply proposed a way to predict the trajectories of an estimated system, by  shortening the distance between two apparently different approaches.

\section{Connection between behaviors and system models}
\label{sec:dd-vs-mb}
The common characterization of data-driven algorithms is that they do not need to write down explicitly a system model (a set of equations) to develop a solution method; a set of data is enough to learn the system dynamics. This is a feature of the behavioral theory too, where systems are represented (in a data-driven fashion) by rank constrained  block Hankel matrices built from observed trajectories.
However, this is simply a different way of writing down a system, since the rank constraint on the Hankel matrix is equivalent to the existence of a system representation (an equation whose coefficients are given by the left  kernel of the matrix), that is \emph{hidden} in the left kernel of such a matrix. 
Thus, there are two different but interconnected ways of approaching the same problem. 
 Both the approaches can be valid alternatives, and studying their properties and connections (mainly in the field of control problems) is a research topic by itself \cite{chiuso2023harnessing,DorflerTesiDepersis,dirVSindir,bridging}. 

In the behavioral theory (despite being a data-driven approach), system representations exist and they can be useful.  The rank constraints on the Hankel matrices enforce the existence of a system representation (commonly known as kernel representation), and vice versa. Indeed, 
 the definition of a dynamical system $\mathcal{B}$, in general, involves a difference equation defined by a polynomial operator $R$:
\begin{equation}
\label{beh}
\mathcal{B} = \Big\{ w \in \R^q | R(\sigma)
w(t) = 0 \Big\}, \ t=1, \dots, T,
\end{equation}
where $\sigma(w(t)) = w(t+1)$ is the shift operator. 
We read in \eqref{beh} that the trajectories $w$ of the system $\mathcal{B}$ and its (non-unique) representation $R$, given by the finite-length snapshots of $w(t)$ and the rows of the operator $R$, respectively, are two \emph{distinct} but \emph{interconnected} objects. 
They represent the data-based and the model-based approach to the same problem, respectively, and they are linked by \eqref{beh}. 

We can look at some examples 
from the literature, where the same problem is solved by these two different approaches (restricted to the case of LTI behaviors):
	\begin{itemize}
		\item the distance between LTI systems: \cite{bdistanceeth} is based on the Hankel matrices generated by the two system trajectories, while
		 \cite{distECC} or \cite{Anglesdemoor} are based on the systems representations only (being representation invariant, though);
		 \item controllability test: \cite{flemmaunc} is based on Hankel matrices, while \cite{cdc19} relies on a coprimeness test between the polynomials in the system representations. 
	\end{itemize}

The row dimension of $R$ plays an important role in the prediction problem, and in particular in the length of the to-be-computed trajectory. $R$ is said \textit{minimal} if its rows are linearly independent \cite{Willems2}. Each linearly dependent row in $R$ increases by one the length of the predicted trajectory. Thus, increasing the row size of the Hankel matrix can be viewed as adding (linearly dependent) equations in a vectorial system model. Observe that, in this last case, the rank of the Hankel matrix does not change (but the dimension of its null space does).

\section{Examples}
In the following, we are going to run and analyze some examples of estimation of future trajectories computed by Algorithm \ref{alg:ddsimnonl}. The analysis will focus on different perturbations on the same set of noise-free data, to observe and discuss how the results change for a given set of nonlinear functions $\phi_{nl}$. 

The system we consider is given by the difference equation
\begin{equation}
\label{sys}
y(t+2) = \sin(y(t+1)) -.1y(t)^2 + u(t).
\end{equation}
whose lag is $\ell=2$. 
The setup for the experiment follows:
\begin{enumerate}
	\item build a trajectory of $80$ points for the system \eqref{sys} starting from $(1, 1)$ and a random input signal; 
	
	\item length-$68$ problem data: generate $100$ random input signals and the corresponding noisy outputs as in \eqref{noise} with $\mu= 0.1$;
	
	\item initial conditions: $\begin{pmatrix}
	u_{69} \\ u_{70}
	\end{pmatrix}, \begin{pmatrix}
	y_{69} \\ y_{70}
	\end{pmatrix}$ from point 1 (they are the same in all the $100$ experiments);

	\item for each of the $100$ data pairs $(u, \tilde{y})$ from point 2,  use the given set of basis functions, the initial conditions from point 3 and
	the last ten inputs from point 1 
	to predict a trajectory (by using
	 Algorithm \ref{alg:ddsimnonl}). 
\end{enumerate}
\begin{remark}
	The noise-free data (first $68$ inputs and outputs) in point 1 are not useful, but we used them only to generate the initial conditions (and an exact, but unknown, trajectory to be compared with the results of the experiment).
\end{remark}
The goal of the experiment is to observe and analyze the results of the statistics over the $100$ solutions computed by Algorithm \ref{alg:ddsimnonl}, comparing them with the true system trajectory. To make the example more realistic, we consider several nonlinear functions to (hopefully) include the ones in the system equation (or to well approximate them, at least). In particular, we consider
$$
\phi_{nl} = \{ y^2, y^3, y^4, \cos y, \sin y, e^y\}.
$$ 

\begin{remark}
	In the deterministic setting, the addition of linearly independent functions to the set $\phi_{nl}$ does not change the result of the prediction problem. This is because
	\begin{itemize}
		\item in the system equation, the corresponding parameter is set to zero;
		\item in the block-Hankel matrix representation, while the row dimension of the matrix increases, the rank loss is the same. 
	\end{itemize}
  For this reason, we did not add the input $u$ as an argument of the nonlinear functions in $\phi_{nl}$. 
\end{remark}

Considering a bigger set of functions $\phi_{nl}$ than the minimal one in the stochastic setting has two main consequences:
\begin{enumerate}
	\item the noise spreads through more (nonlinear) terms;
	\item bigger block-Hankel matrices usually means ill-conditioned problems. 
\end{enumerate}
Estimating the system from a set of functions $\phi_{nl}$ bigger than the minimal one means \emph{choosing} a particular system among a bigger model class defined by all the considered basis functions. The addition of a regularization term in the solution of the least squares problem  \eqref{LSprob}  enforces the identification of the sought system.

The regularization in system identification is a classical approach in the literature  \cite{pillonetto2022regularized}, and different choices for the norm of the regularization term 
 already appeared in system identification problems \cite{med2,Bruntonsindy,LSSVM}.
 We consider the following two choices:
\begin{equation}
\label{eq:reg1}
\min_{g_i} \| H_d g_i - v_{ini, i} \|_2 + \lambda_1 \| g_i \|_1,
\end{equation}
\begin{equation}
\label{eq:reg2}
\min_{g_i} \| H_d g_i - v_{ini, i} \|_2 + \lambda_2 \| g_i \|_2,
\end{equation}
where $\lambda_1$ and $\lambda_2$ are regularization parameters. In the following experiments, we solve all the $100$ problems for different values of $\lambda_1$ and $\lambda_2$ and we choose the one for which the average over the $100$
estimated trajectories is closer (in norm) to the exact one. 

The results for the two choices are shown in Figures \ref{fig:reg1} and \ref{fig:reg2}, respectively. 
\begin{figure}[htb]
	\centering
	\includegraphics[width=8cm,height=5cm]{./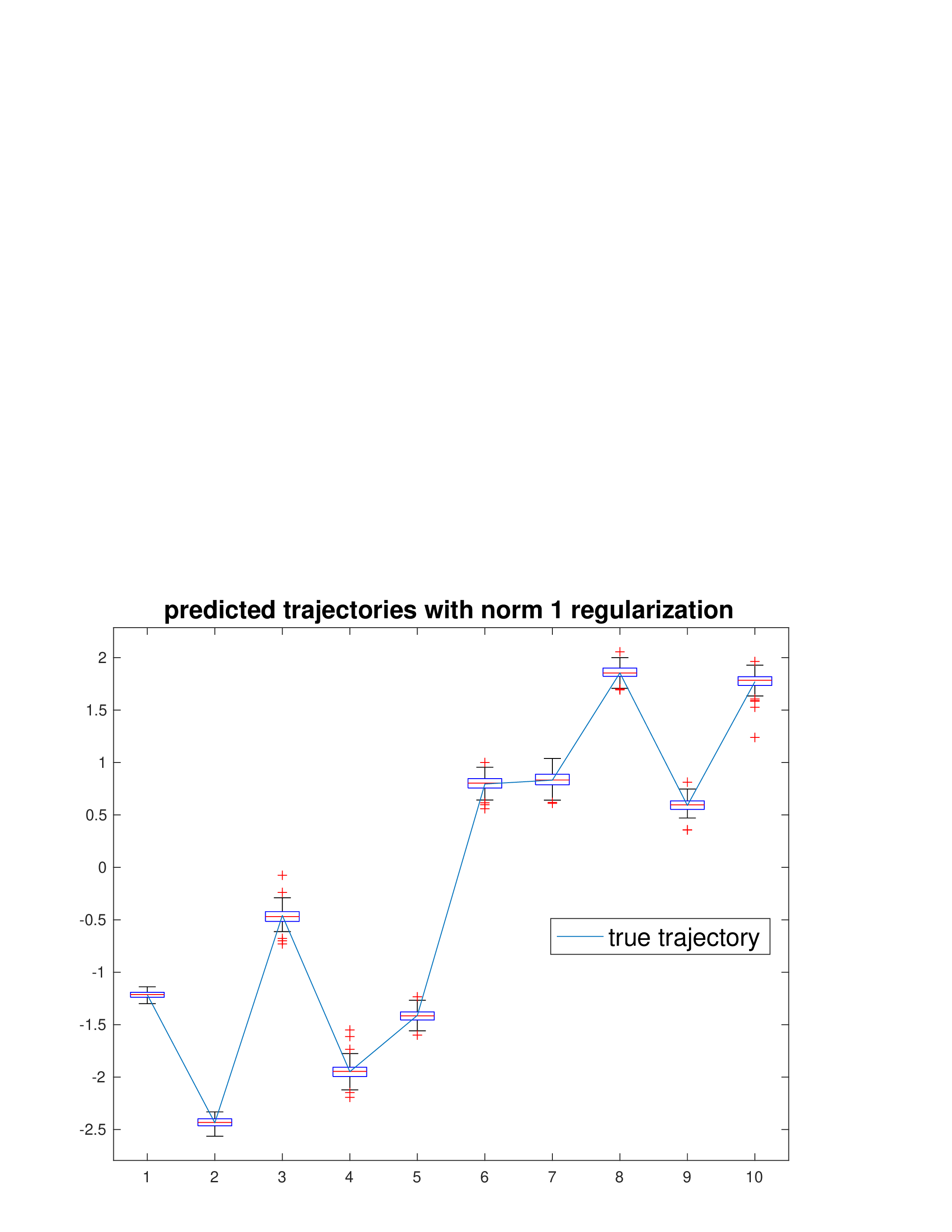}
	\caption{\label{fig:reg1}Estimation of a trajectory of \eqref{sys} from noisy data and a bigger set of basis functions: norm-1 regularization.}
\end{figure}
\begin{figure}[htb]
	\centering
	\includegraphics[width=8cm,height=5cm]{./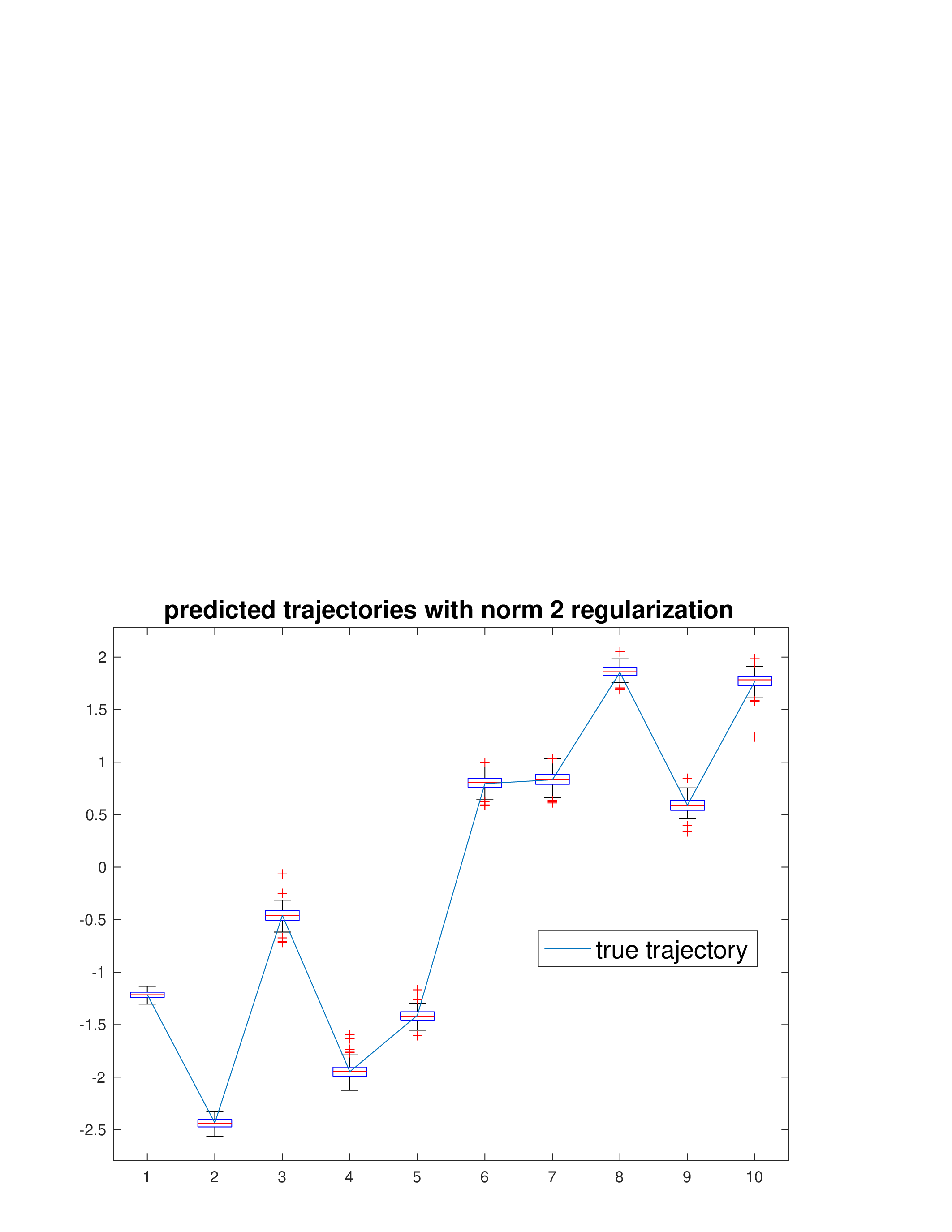}
	\caption{\label{fig:reg2}Estimation of a trajectory of \eqref{sys} from noisy data and a bigger set of basis functions: norm-2 regularization.}
\end{figure}

Both experiments show that most of the predicted trajectories are very close to the exact (but unknown) one.  Indeed, the two figures look similar.  The choice between \eqref{eq:reg1} and \eqref{eq:reg2} corresponds to a different weighting of the entries of the solution vector. 
The advantage of \eqref{eq:reg2} can be the easier implementation.

 To key points of the experiments follow:
 \begin{enumerate}
 	\item the choice of the basis functions in $\phi_{nl}$ influences the accuracy of the computed trajectories (that is, how well they approximate the exact solution). The preliminary knowledge of such functions is a strong assumption;
 	\item the addition of regularization in the solution method can deal with an increasing number of basis functions as well as with ill-conditioned problems; 
 	\item the choice of the norm in the regularization can influence the accuracy of the estimated solution (this did not happen   In the considered example because of the small problem size and the low noise level). 
 \end{enumerate}

\section{Applications in control}
As well as being important itself, the proposed prediction algorithm can be the key tool in the solution of some nonlinear control problems in the stochastic setting.  We briefly sketch some ideas in the following. 
\subsection{Output matching problem}
The output matching problem for output-generalized bilinear systems has been studied in \cite{Hemelhof} in the deterministic setting. Roughly speaking, such a problem consists in the analogous of a prediction problem, where the role of inputs and outputs is switched. So we aim to compute the inputs that generated a given output trajectory. 

The extension of this problem to the stochastic setting can be straightforward by using the prediction algorithm proposed in the paper. 

\subsection{Nonlinear predictive control}
\label{sec:ddpc}
The estimation of future system trajectories is the cornerstone of predictive control strategies. This approach consists of iterating the following steps:
\begin{enumerate}
	\item predict N future steps from the given set of data;
	\item optimize the estimated steps with respect to a cost function;
	\item get an optimal controller from the optimization process;
	\item keep the first input value from the controller and go to the next iteration. 
\end{enumerate}

If the system is represented by a (estimated) equation, this problem is commonly known as MPC (Model Predictive Control). 
But, it is possible to solve the same problem by replacing the system equation with a set of (noisy) data. 
The data-driven prediction algorithm proposed in the paper can be used for the first step. 
 Optimization strategies for the data-driven setting are currently object of research.

\section{Conclusion}
\label{sec:conc}
Moving on the line between model-based and data-driven approaches for the theory of nonlinear systems, we proposed an iterative data-driven algorithm that predicts the same solution as the model-based approach from a set of noisy data. The goal was twofold: show some connection between the two (apparently) different strategies and extend some existing results from the deterministic to the stochastic setting.

The proposed prediction algorithm can be useful in the solution of some nonlinear control problems. Future work can focus on the analysis and development of nonlinear control methods (in particular, the one described in Section \ref{sec:ddpc}).

\section*{ACKNOWLEDGMENT}

A. Fazzi is member of the Gruppo Nazionale Calcolo Scientifico-Istituto Nazionale di Alta Matematica (GNCS-INdAM).

%%%%%%%%%%%%%%%%%%%%%%%%%%%%%%%%%%%%%%%%%%%%%%%%%%%%%%%%%%%%%%%%%%%%%%%%%%%%%%%%

\bibliographystyle{IEEEtran}
\bibliography{Nonlpred.bib,mypapers.bib}

% Generated by IEEEtran.bst, version: 1.14 (2015/08/26)
\begin{thebibliography}{10}
\providecommand{\url}[1]{#1}
\csname url@samestyle\endcsname
\providecommand{\newblock}{\relax}
\providecommand{\bibinfo}[2]{#2}
\providecommand{\BIBentrySTDinterwordspacing}{\spaceskip=0pt\relax}
\providecommand{\BIBentryALTinterwordstretchfactor}{4}
\providecommand{\BIBentryALTinterwordspacing}{\spaceskip=\fontdimen2\font plus
\BIBentryALTinterwordstretchfactor\fontdimen3\font minus
  \fontdimen4\font\relax}
\providecommand{\BIBforeignlanguage}[2]{{%
\expandafter\ifx\csname l@#1\endcsname\relax
\typeout{** WARNING: IEEEtran.bst: No hyphenation pattern has been}%
\typeout{** loaded for the language `#1'. Using the pattern for}%
\typeout{** the default language instead.}%
\else
\language=\csname l@#1\endcsname
\fi
#2}}
\providecommand{\BIBdecl}{\relax}
\BIBdecl

\bibitem{expdesign}
H.~Hjalmarsson, ``From experiment design to closed-loop control,''
  \emph{Automatica}, vol.~41, no.~3, pp. 393--438, 2005.

\bibitem{PoldermanWillems}
J.~W. Polderman and J.~C. Willems, \emph{{Introduction to Mathematical Systems
  Theory}}, ser. Texts in Applied Mathematics.\hskip 1em plus 0.5em minus
  0.4em\relax New York, NY: Springer New York, 1998, vol.~26.

\bibitem{narx}
V.~K. Mishra, I.~Markovsky, A.~Fazzi, and P.~Dreesen, ``Data driven simulation
  for {NARX} systems,'' in \emph{2021 29th European Signal Processing
  Conference (EUSIPCO)}, 2021, pp. 1055--1059.

\bibitem{rueda2020data}
J.~G. Rueda-Escobedo and J.~Schiffer, ``Data-driven internal model control of
  second-order discrete volterra systems,'' in \emph{IEEE Conf. Decision
  Control}.\hskip 1em plus 0.5em minus 0.4em\relax IEEE, 2020, pp. 4572--4579.

\bibitem{ddsim-narx}
I.~Markovsky, ``Data-driven simulation of generalized bilinear systems via
  linear time-invariant embedding,'' \emph{IEEE Trans. Automat. Contr.}, 2023.

\bibitem{Hemelhof}
L.~Hemelhof, I.~Markovsky, and P.~Patrinos, ``Data-driven output matching of
  output-generalized bilinear and linear parameter-varying systems,'' in
  \emph{2023 European Control Conference (ECC)}, 2023.

\bibitem{BerbdatadrivenMPC}
J.~Berberich, J.~K\"ohler, M.~A. M\"uller, and F.~Allg\"ower, ``{Linear
  Tracking MPC for Nonlinear Systems—Part II: The Data-Driven Case},''
  \emph{IEEE Transactions on Automatic Control}, vol.~67, no.~9, pp.
  4406--4421, 2022.

\bibitem{Ident}
I.~Markovsky and F.~D\"orfler, ``Identifiability in the behavioral setting,''
  \emph{IEEE Trans. Automat. Contr.}, 2023.

\bibitem{ddLTIsim}
I.~Markovsky and P.~Rapisarda, ``Data-driven simulation and control,''
  \emph{Int. J. Control}, vol.~81, pp. 1946--1959, 2008.

\bibitem{flemma}
J.~C. Willems, P.~Rapisarda, I.~Markovsky, and B.~{De Moor}, ``A note on
  persistency of excitation,'' \emph{Syst. Control Lett.}, vol.~54, no.~4, pp.
  325--329, 2005.

\bibitem{BCFautomatica}
V.~Breschi, A.~Chiuso, and S.~Formentin, ``Data-driven predictive control in a
  stochastic setting: a unified framework,'' \emph{Automatica}, vol. 152, p.
  110961, 2023.

\bibitem{Golubvanloan}
G.~Golub and C.~Van~Loan, \emph{Matrix Computation}, 3rd~ed.\hskip 1em plus
  0.5em minus 0.4em\relax Baltimore, Maryland: Johns Hopkins University Press,
  1996.

\bibitem{chiuso2023harnessing}
A.~Chiuso, M.~Fabris, V.~Breschi, and S.~Formentin, ``Harnessing the final
  control error for optimal data-driven predictive control,'' 2023,
  \url{https://arxiv.org/abs/2312.14788}.

\bibitem{DorflerTesiDepersis}
F.~D\"orfler, P.~Tesi, and C.~De~Persis, ``On the certainty-equivalence
  approach to direct data-driven lqr design,'' \emph{IEEE Trans. Automat.
  Control}, pp. 1--8, 2023.

\bibitem{dirVSindir}
V.~Krishnan and F.~Pasqualetti, ``On direct vs indirect data-driven predictive
  control,'' in \emph{2021 60th IEEE Conference on Decision and Control (CDC)},
  2021, pp. 736--741.

\bibitem{bridging}
F.~D\"orfler, J.~Coulson, and I.~Markovsky, ``Bridging direct \& indirect
  data-driven control formulations via regularizations and relaxations,''
  \emph{IEEE Trans. Automat. Control}, vol.~68, pp. 883--897, 2023.

\bibitem{bdistanceeth}
A.~Padoan, J.~Coulson, H.~J. van Waarde, J.~Lygeros, and F.~D\"orfler,
  ``Behavioral uncertainty quantification for data-driven control,'' in
  \emph{2022 IEEE 61st Conference on Decision and Control (CDC)}, 2022, pp.
  4726--4731.

\bibitem{distECC}
A.~Fazzi and I.~Markovsky, ``Distance problems in the behavioral setting,''
  \emph{Eur. J. Control}, vol.~74, p. 100832, 2023, 2023 European Control
  Conference Special Issue.

\bibitem{Anglesdemoor}
K.~{De Cock} and B.~{De Moor}, ``Subspace angles between {ARMA} models,''
  \emph{Syst. Control Lett.}, vol.~46, pp. 265--270, 2002.

\bibitem{flemmaunc}
V.~Mishra, I.~Markovsky, and B.~Grossmann, ``Data-driven tests for
  controllability,'' \emph{Control Systems Letters}, vol.~5, p. 517–522,
  2020.

\bibitem{cdc19}
A.~Fazzi, N.~Guglielmi, and I.~Markovsky, ``Computing common factors of matrix
  polynomials with applications in system and control theory,'' in \emph{Proc.
  of the IEEE Conf. on Decision and Control}, Nice, France, 2019, pp.
  7721--7726.

\bibitem{Willems2}
J.~C. Willems, ``{From time series to linear system- Part I. Finite dimensional
  linear time invariant systems},'' \emph{Automatica}, vol.~22, pp. 561--580,
  1986.

\bibitem{pillonetto2022regularized}
G.~Pillonetto, T.~Chen, A.~Chiuso, G.~De~Nicolao, and L.~Ljung,
  \emph{Regularized System Identification: Learning Dynamic Models from Data},
  ser. Communications and Control Engineering.\hskip 1em plus 0.5em minus
  0.4em\relax Springer International Publishing, 2022.

\bibitem{med2}
I.~Markovsky, ``Application of low-rank approximation for nonlinear system
  identification,'' in \emph{25th IEEE Mediterranean Conference on Control and
  Automation}, Valletta, Malta, July 2017, pp. 12--16.

\bibitem{Bruntonsindy}
S.~L. Brunton, J.~L. Proctor, and J.~N. Kutz, ``Discovering governing equations
  from data by sparse identification of nonlinear dynamical systems,''
  \emph{Proceedings of the National Academy of Sciences}, vol. 113, no.~15, pp.
  3932--3937, 2016.

\bibitem{LSSVM}
J.~Suykens and J.~Vandewalle, ``{Least Squares Support Vector Machine
  Classifiers},'' \emph{Neural Processing Letters}, vol.~9, p. 293–300, 1999.

\end{thebibliography}

\end{document}